\newtheorem{theorem}{Theorem}[section]
\newtheorem{lemma}[theorem]{Lemma}
\theoremstyle{definition}
\newtheorem{definition}[theorem]{Definition}
\theoremstyle{remark}
\numberwithin{equation}{section}
\journal{Elsevier}
\begin{document}

\begin{frontmatter}
\title{Existence of solutions for a class of Kirchhoff-type equations with indefinite potential}

\author{Linlian Xiao$^{1}$}

\author{Jiaqian Yuan$^{2}$}

\author{Jian Zhou\corref{cor1}
	$^{3}$}
\ead{zhoujiandepict@163.com}
\author{Yunshun Wu$^{4}$}

\affiliation[1]{organization={School of Mathematical Sciences, Guizhou Nromal University},
	city={Guiyang},
	postcode={550025}, 
	state={Guizhou},
	country={China}}

\cortext[cor1]{Corresponding author}

\begin{abstract}
In this paper, we consider the existence of solutions of the following Kirchhoff-type problem
\[
\left\{
\begin{array}
	[c]{ll}%
	-\left(a+b\int_{\mathbb{R}^3}|\nabla u|^2dx\right)\Delta u+ V(x)u=f(x,u),~{\rm{in}}~ \mathbb{R}^{3},\\
	u\in H^1(\mathbb{R}^3),%
\end{array} %
\right.
\]
where $a,b$ are postive constants, and the potential $V(x)$ is continuous and indefinite in sign. Under some suitable assumptions on $V(x)$ and $f$, we obtain the existence of solutions by the Symmetric Mountain Pass Theorem.

\end{abstract}


\begin{keyword}
	Kirchhoff-type equations; $(C)_c$-condition; Symmetric Mountain Pass Theorem
\end{keyword}
\end{frontmatter}
\section{Introduction and main result}
\label{1}
In this paper, we consider the existence of solutions of the following Kirchhoff-type problem	
\begin{equation}\label{e1-1}
	\left\{
	\begin{array}
		[c]{ll}%
		-\left(a+b\int_{\mathbb{R}^3}|\nabla u|^2dx\right)\Delta u+ V(x)u=f(x,u)\text{,}
		& \text{in }\mathbb{R}^{3}\text{,}\\
		u\in H^1(\mathbb{R}^3)\text{,}%
	\end{array} %
	\right.
\end{equation}
where $a,~b$ are postive constants, and the potential $V(x)$ is continuous and indefinite in sign. The nonlinear term $\int_{\mathbb{R}^3}|\nabla u|^2dx$ appears in (\ref{e1-1}), which means that (\ref{e1-1}) is not a pointwise identity. This leads to some mathematical difficulties that make the research particularly interesting. (\ref{e1-1}) has an interesting physics background. When $V(x)=0$, and a bounded domain $\Omega \subset\mathbb{R}^N$ is substituted $\mathbb{R}^3$, then we obtain the following nonlocal Kirchhoff-type problem
\begin{equation}\label{e1-1-1}
	\left\{
	\begin{array}
		[c]{ll}%
		-\left(a+b\int_{\Omega}|\nabla u|^2dx\right)\Delta u=f(x,u)\text{,}
		& \text{in }\Omega \text{,}\\
		u=0,  & \text{on }\Omega \text{.}%
	\end{array} %
	\right.
\end{equation}
The problem (\ref{e1-1-1}) is regard to the stationary analogue  of the equation
\begin{equation}\label{e1-1-2}
	u_{tt}-\left(a+b\int_{\Omega}|\nabla u|^2dx\right)\Delta u=f(x,u)\text{,}
\end{equation}
which was presented by Kirchhoff in \cite{G.Kirchhoff1883}, and (\ref{e1-1-2}) is a generalization of the classical D'Alembert's wave equation for free vibrations of elastic strings. Problem (\ref{e1-1-2}) has been increasingly more attention after Lions in \cite{Lions1978} introduced an abstract framework to the problem. We can refer to [\citealp{BernsteinS1940},\citealp{Arosio A1996},\citealp{Cavalcanti2001}] for the physical and mathematical background of this problem.

In recent years, Schr{\"o}dinger Kirchhoff equations have been extensively researched , there are massive works adopting various assumptions on $V(x)$ and $f$ see [\citealp {WuX2011}-\citealp{Jiang2023}]. The potential $V(x)$ is assumed to be positive definite has been considered in [\citealp{WuX2011}-\citealp{Zhang2020}]. In \cite{WuX2011}, Wu used a Symmetric Montain Theorem obtained nontrivial solutions and high energy solutions for equations similar to (\ref{e1-1}) in $\mathbb{R}^{N}$. In \cite{Chengb2016}, by Ekeland's variational principle and the Montain Pass Theorem, Cheng obtained multiplicity of nontrivial solutions for the nonhomogeneous Schr{\"o}dinger Kirchhoff type problem in $\mathbb{R}^{N}$. The potential $V(x)$ is indefinite has been considered in [\citealp{ShuaiW2015}-\citealp{Jiang2023}]. In \cite{Wu2015}, Chen and Wu got a nontrivial solution and an unbounded sequence of solutions for the problem (\ref{e1-1}) in $\mathbb{R}^{N}$ via the Morse Theory and the Fountain Theorem. In \cite{Jiangs2022}, using the Local Linking Theorem and Clark's Theorem, Jiang and Liu obtained the  existence of multiple solutions for problem (\ref{e1-1}). 

In this paper, we will consider $V(x)$ is indefinite in sign and do not assume any compactness condition on $V(x)$ which is different from most of the articles mentioned above. Motived by Chen \cite{Chen2021} and Sun  \cite{SunJ2015}, we overcome two difficulties, namely, verifying the link geometry and the boundedness of Cerami sequence for the corresponding functional of (\ref{e1-1}). We obtain the existence of solutions for (\ref{e1-1}) by the Symmetric Montain Pass Theorem.

Set $F(x,u)=\int_0^uf(x,s)ds$. $V^+(x)={\rm{max}}\left\lbrace V(x),0\right\rbrace $, $V^-(x)={\rm{max}}\left\lbrace -V(x),0\right\rbrace $.
Before stating our main result, we make the following assumptions:
\begin{enumerate}	
	\item[$\left(  V1\right)  $] $V(x)\in C(\mathbb{R}^{3},\mathbb{R})$ with $V(x)=V^+(x)-V^-(x)$ and $V(x)$ is bounded from below, and there is $M>0$ such that the set $\left\lbrace x\in\mathbb{R}^{3}|V^+(x)<M \right\rbrace$ is nonempty and has finite measure.	
	\item[$\left(  V_2\right)  $] There exists a constant $\eta_0>1$ such that 
	\[
	\eta_1:=\inf\limits_{u\in H^1(\mathbb{R}^{3})\setminus \left\lbrace 0\right\rbrace } \frac{\int_{\mathbb{R}^{3}} \left(  a\left | \nabla u \right|^2+V^+u^2 \right) dx }{\int_{\mathbb{R}^{3}}V^-u^2dx}\geq \eta_0.
	\]
	\item[$\left(  f_{1}\right)  $] $f\in C^1(\mathbb{R}^{3},\mathbb{R})$, and there exist constants $p\in(2,6)$ and $c>0$ such that
	\[
	\left| f(x,u)\right| \leq c(1+\left| u\right|^{p-1} ),~~~~\forall (x,u)\in \mathbb{R}^{3} \times\mathbb{R}.
	\]
	\item[$\left(  f_{2}\right)  $]  $f(x,u)=o(u)$ as $u\rightarrow0$ uniformly in $x\in\mathbb{R}^3$, and is 4-superlinear at infinity,
	\[
	\lim_{|u|\rightarrow\infty}\frac{F(x,u)}{u^4}=+\infty.
	\]	
	\item[$\left(  f_{3}\right)  $] There exist $a_0,b_0>0$ and $\alpha\in(0,\alpha_*)$ such that 
	\[
	0<(4+\frac{1}{a_0\left| u\right|^\alpha +b_0})F(x,u)\leq uf(x,u)
	, ~~~{\rm{for}} ~x\in\mathbb{R}^{3}~{\rm{and}} ~u\neq0,\]
	where $\alpha_* :={\rm{min}}\left\lbrace p',5p'-6\right\rbrace $, $\frac{1}{p}+\frac{1}{p'}=1$.	
	\item[$\left(  f_{4}\right)  $] $\lim\limits_{|x|\rightarrow\infty}\sup\limits_{|u|\leq l}\frac{\left| f(x,u)\right| }{\left| u\right| }=0$ for every $l>0$.		
\end{enumerate}	

Now, we are ready to state the main result of this paper:	
\begin{theorem}\label{th1}
	Under assuptions $(V_1),~(V_2)$ and $(f_1)-(f_4)$, if $f(x,u)$ is odd in $u$, then problem (\ref{e1-1}) possesses infinitely many solutions.	
\end{theorem}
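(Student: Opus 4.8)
The plan is to recast problem (\ref{e1-1}) variationally and apply the Symmetric Mountain Pass Theorem. I would work in the Hilbert space $E=\{u\in H^1(\mathbb{R}^3):\int_{\mathbb{R}^3}(a|\nabla u|^2+V^+(x)u^2)\,dx<\infty\}$ with norm $\|u\|^2=\int_{\mathbb{R}^3}(a|\nabla u|^2+V^+u^2)\,dx$. First note that $(V_1)$ forces $V^-$ to be bounded and supported on $\{V<0\}\subseteq\{V^+<M\}$, a set of finite measure, so $E\hookrightarrow H^1(\mathbb{R}^3)\hookrightarrow L^q(\mathbb{R}^3)$ for $q\in[2,6]$. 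I would then define $I(u)=\frac a2\int|\nabla u|^2+\frac b4(\int|\nabla u|^2)^2+\frac12\int Vu^2-\int F(x,u)$, check via $(f_1)$ and $(f_2)$ that $I\in C^1(E,\mathbb{R})$ with critical points the weak solutions of (\ref{e1-1}), and observe that the oddness of $f$ makes $F$ even, so $I$ is even with $I(0)=0$. The decisive structural input is $(V_2)$, which gives $\int V^-u^2\le\eta_0^{-1}\|u\|^2$ and hence $\frac a2\int|\nabla u|^2+\frac12\int Vu^2\ge\frac12(1-\eta_0^{-1})\|u\|^2$: the indefinite quadratic part is coercive on $E$.

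Next I would verify the two geometric conditions. For the local condition, combining $f(x,u)=o(u)$ from $(f_2)$ with the growth $(f_1)$ gives $F(x,u)\le\varepsilon u^2+C_\varepsilon|u|^p$; together with the coercivity above and the embeddings $\int u^2\le C\|u\|^2$, $\int|u|^p\le C\|u\|^p$ this yields $I(u)\ge c_1\|u\|^2-c_2\|u\|^p$, so $I\ge\alpha>0$ on $\|u\|=\rho$ for small $\rho$ (here $p>2$ is essential). For the condition on each finite-dimensional subspace $E'\subset E$, I would exploit that the $4$-superlinearity $F(x,u)/u^4\to+\infty$ in $(f_2)$ dominates the quartic Kirchhoff term $\frac b4(\int|\nabla u|^2)^2$: on $E'$ all norms are equivalent and $\inf_{\|v\|=1,\,v\in E'}\int v^4>0$, so for $u=Rv$ the term $-\int F(x,Rv)$ outgrows the $O(R^4)$ positive terms, giving $I(u)\to-\infty$ on $E'$ and thus $I\le0$ outside a ball $B_{R(E')}$.

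The technical heart is the Cerami condition $(C)_c$. For boundedness of a sequence with $I(u_n)\to c$ and $(1+\|u_n\|)\|I'(u_n)\|\to0$ I would use the identity $I(u_n)-\frac14\langle I'(u_n),u_n\rangle=\frac14\int(a|\nabla u_n|^2+Vu_n^2)+\int(\frac14 u_nf(x,u_n)-F(x,u_n))$, in which the Kirchhoff quartic term cancels exactly. By $(f_3)$ the integrand $\frac14 u_nf-F$ is nonnegative (this is precisely where the ``$4$'' and the correction $\frac{1}{a_0|u|^\alpha+b_0}$, with $\alpha<\alpha_*$, are calibrated to the nonlocal term), and by $(V_2)$ the first integral is $\ge\frac14(1-\eta_0^{-1})\|u_n\|^2$; since the left side stays bounded, $\{u_n\}$ is bounded, and I extract $u_n\rightharpoonup u$ in $E$.

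Establishing strong convergence $u_n\to u$ is the main obstacle, since $\mathbb{R}^3$ admits no compact Sobolev embedding and $V$ is indefinite. The strategy is to express $\|u_n-u\|^2$ through $\langle I'(u_n)-I'(u),u_n-u\rangle$, which vanishes in the limit, plus the remainders $\int V^-(u_n-u)^2$ and $\int(f(x,u_n)-f(x,u))(u_n-u)$, and to show both tend to $0$ by splitting $\mathbb{R}^3=B_R\cup B_R^c$. For the $V^-$ term I would use that $V^-$ is bounded and supported on the finite-measure set from $(V_1)$: Rellich compactness handles $B_R$, while the small measure of $B_R^c$ and the uniform $L^6$ bound handle the tail. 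For the nonlinear remainder I would split the same way, using $H^1(B_R)\hookrightarrow\hookrightarrow L^p(B_R)$ with $(f_1)$ on $B_R$, and the decay hypothesis $(f_4)$, $\sup_{|u|\le l}|f(x,u)|/|u|\to0$ as $|x|\to\infty$, together with $(f_1)$ and the $L^6$ bound to absorb the high-$|u_n|$ part on $B_R^c$. This gives $u_n\to u$ in $E$; in particular $\int|\nabla u_n|^2\to\int|\nabla u|^2$, so the nonlocal coefficient converges and $u$ solves (\ref{e1-1}). With $I$ even, $I(0)=0$, the geometry of the previous paragraph, and $(C)_c$ in hand, the Symmetric Mountain Pass Theorem yields an unbounded sequence of critical values, hence infinitely many solutions.
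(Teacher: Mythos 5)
Your proposal is correct, and its overall architecture coincides with the paper's: the same space $E$ and functional $\Phi$, the same verification of $(I_1)$--$(I_3)$, and essentially the same strong-convergence argument (weak limit, then splitting $\mathbb{R}^3$ using the boundedness and finite-measure support of $V^-$, local compactness for the nonlinear term where $|u_n|\le l$ and $|x|\le L$, hypothesis $(f_4)$ where $|x|\ge L$, and $(f_1)$ with $p<6$ where $|u_n|\ge l$). The one genuinely different step is the boundedness of Cerami sequences, Lemma \ref{3-2}. You settle it in one line: the Kirchhoff term cancels in $\Phi(u_n)-\frac14\langle\Phi'(u_n),u_n\rangle$, $(f_3)$ gives $u_nf(x,u_n)-4F(x,u_n)\ge0$ pointwise, and \eqref{e2-4} (a consequence of $(V_2)$) bounds the surviving quadratic part below by $\frac{\eta_0-1}{4\eta_0}\|u_n\|^2$, so $\{u_n\}$ is bounded at once. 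The paper starts from the same identity but \emph{discards} the coercive term after the third line of \eqref{e3--2}, and instead runs a long argument: splitting at $|u_n|=1$, a reverse H\"older inequality with the exponent $r$ of \eqref{e3-3}, the resulting bounds \eqref{e3-5}--\eqref{e3-6} on $f(\cdot,u_n)$, and the closing sublinear self-estimate $\|u_n\|\le c_{10}+c_{11}\|u_n\|^{\alpha/p'}$ with $\alpha<p'$. That longer route is where the correction factor $\frac{1}{a_0|u|^\alpha+b_0}$ and the restriction $\alpha<\alpha_*$ in $(f_3)$ are actually used; in your argument they play no role (the ``$4$'' is indeed calibrated to the quartic nonlocal term, but your parenthetical about the correction term is off the mark---you only use $uf\ge 4F>0$). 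Your shortcut is legitimate: the paper itself invokes \eqref{e2-4}, hence $(V_2)$, at that very line (note $(V_2)$ is even missing from the stated hypotheses of Lemma \ref{3-2}), and once that term is kept rather than dropped the proof is finished. What your route buys is brevity, plus the observation that under $(V_2)$ only the crude consequence $uf\ge 4F>0$ of $(f_3)$ is needed for boundedness; what the paper's route buys is an argument whose core does not rest on coercivity of the quadratic form, and which would therefore survive a weakening of $(V_2)$ to a genuinely indefinite quadratic part.

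Two small repairs to your sketch, neither fatal. First, in $(I_3)$, pointwise $4$-superlinearity does not by itself control $\int F(x,Rv)\,dx$ uniformly over the unit sphere of a finite-dimensional subspace; as the paper does, first derive from $(f_1)$, $(f_2)$ the uniform bound $F(x,u)\ge M_2|u|^4-C(M_2)|u|^2$ with $M_2>b/(4b_1^4)$, and then conclude. Second, in the convergence step your expansion of $\langle\Phi'(u_n)-\Phi'(u),u_n-u\rangle$ produces, besides the $V^-$ and $f$ remainders you list, a Kirchhoff remainder; it is harmless, since $-b\int_{\mathbb{R}^3}|\nabla u_n|^2dx\int_{\mathbb{R}^3}|\nabla(u_n-u)|^2dx\le 0$ and the cross term $-b\left(\int_{\mathbb{R}^3}|\nabla u_n|^2dx-\int_{\mathbb{R}^3}|\nabla u|^2dx\right)\int_{\mathbb{R}^3}\nabla u\nabla(u_n-u)dx$ vanishes by weak convergence and boundedness, but it should be accounted for explicitly.
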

\section{Preliminaries}	
\label{2}
We work in the Hilbert space	
\[
E:=\left\lbrace u\in H^1(\mathbb{R}^{3}):	\int_{\mathbb{R}^{3}} \left( a \left| \nabla u \right|^2+V^+(x)\left| u\right| ^2\right) dx<+ \infty\right\rbrace ,
\]	
with the inner product	
\[
\left\langle u,v\right\rangle =\int_{\mathbb{R}^{3}}  \left( a \nabla u \nabla v +V^+(x) uv \right) dx,~~\forall u,v\in E,
\]	
and the norm	
\[
\left\| u\right\| =\left( \int_{\mathbb{R}^{3}} \left( a \left| \nabla u \right|^2+V^+(x)\left| u\right| ^2\right) dx\right) ^{1/2},~~\forall u\in E.
\]

The problem (\ref{e1-1}) has a variational structure, then a weak solution of problem (\ref{e1-1}) is a critical point of the following functional $\Phi:E\rightarrow\mathbb{R}$
\begin{equation}\label{e2-1}
	\Phi(u)=\frac{1}{2}\int_{\mathbb{R}^3} \left( a |  \nabla u|^2+ V(x)u^2\right) dx+\frac{b}{4}\left(
	\int_{\mathbb{R}^3}|\nabla u|^2dx\right)^2-\int_{\mathbb{R}^3}F(x,u)dx.
\end{equation}
Then under the assumptions $(V_1)$, $(f_1)$ and $(f_2)$, the functional $\Phi\in C^1(E,\mathbb{R})$ and for all $u,v\in E$,
\begin{equation}\label{e2-2}
	\left\langle \Phi'(u),v\right\rangle =\int_{\mathbb{R}^3} \left( a \nabla u \nabla v+ V(x)uv\right) dx+b
	\int_{\mathbb{R}^3}|\nabla u|^2dx\int_{\mathbb{R}^3}\nabla u\nabla vdx-\int_{\mathbb{R}^3}f(x,u)vdx.	
\end{equation}

For any $s\in [2,6]$, since the embedding $E\hookrightarrow L^s(\mathbb{R}^3)$ is continuous, there exists a constant $d_s>0$ such that
\begin{equation}\label{e2-3}    
	|u|_s\leq d_s \|u\|,~~\forall u\in E.
\end{equation}

Forthermore, it follows from $(V_2)$ that
\begin{equation}\label{e2-4} 
	\begin{split}	
		\int_{\mathbb{R}^{3}} \left( a \left| \nabla u \right|^2+V^+\left| u\right| ^2 \right) dx
		\geq& \int_{\mathbb{R}^{3}} \left( a \left| \nabla u \right|^2+V\left| u\right| ^2\right) dx \\
		\geq& \frac{\eta_0-1}{\eta_0} \int_{\mathbb{R}^{3}} \left( a \left| \nabla u \right|^2+V^+\left| u\right| ^2\right) dx.
		\end{split}
\end{equation}	

To complete the proof of theorem \ref{th1}, we need the following Symmetric Mountain Pass Theorem: 	
\begin{theorem}\label{th2} (\cite{Rabinowitz1986})  
	Let X be an infinite demensional Banach space, $X=Y \oplus Z$, where Y is finite dimensional. If $I \in	C^1(X,\mathbb{R})$ satisfies $(C)_{c}$-condition for all $c>0$, and
	\begin{enumerate}
		\item[$\left(  I_1\right)  $] $I(0)=0,I(-u)=I(u)$, $\forall u \in X$;
		\item[$\left(  I_2\right)  $] there exist constants $\alpha$, $\rho$ $> 0$, such that $I|_{\partial B_{\rho}\cap Z}\ge \alpha$;	
		\item[$\left(  I_3\right)  $] for any finite dimensional subspace $\tilde{X}\subset X$, there is $R=R(\tilde{X})>0$, such that $I(u)\le 0$ on $\tilde{X} \setminus B_{R}$;\\	
		then $I$ possesses an unbounded sequence of critical values.
	\end{enumerate}	
\end{theorem}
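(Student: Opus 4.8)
The plan is to prove this abstract result by the classical genus-based minimax method, the two engines being the Krasnoselskii $\mathbb{Z}_2$-genus $\gamma$ and an equivariant (odd) deformation lemma valid under the Cerami condition. First I would recall the properties of the genus on the class $\Sigma$ of closed symmetric sets $A=-A$ in $X\setminus\{0\}$: monotonicity, subadditivity $\gamma(A\cup B)\le\gamma(A)+\gamma(B)$, the odd-map (supervariance) property, continuity (every $A\in\Sigma$ has a closed symmetric neighborhood of the same genus), and, decisively, the intersection property: since $\dim Y=:k<\infty$, any $A\in\Sigma$ with $\gamma(A)>k$ must meet $Z$. Because $I$ is even, $I'$ is odd; as $I\in C^1(X,\mathbb{R})$ on a Banach space I would build an \emph{odd} pseudo-gradient vector field and, invoking the $(C)_c$-condition, derive a symmetric deformation lemma: for $c>0$ that is not a critical value there are $\varepsilon>0$ and an odd homeomorphism $\eta$ of $X$ with $\eta(I^{c+\varepsilon})\subset I^{c-\varepsilon}$ and $\eta=\mathrm{id}$ off a neighborhood of the level $c$.

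Next I set up the minimax scheme. Fix an increasing sequence of finite-dimensional subspaces $X_m=Y\oplus\operatorname{span}\{e_1,\dots,e_m\}\subset X$, so $\dim X_m=k+m$. By $(I_3)$, for each $m$ there is $R_m>0$ (which I may take $>\rho$) with $I\le 0$ on $X_m\setminus B_{R_m}$; write $D_m=\{u\in X_m:\|u\|\le R_m\}$, so $I\le0$ on $\partial D_m$. Let $G_m$ be the odd continuous maps $h\colon D_m\to X$ with $h=\mathrm{id}$ on $\partial D_m$, and for each $j$ define
\[ \Gamma_j=\bigl\{\,h(\overline{D_m\setminus W}):m\ge j,\ h\in G_m,\ W\in\Sigma \text{ open symmetric},\ \gamma(\overline{W})\le m-j\,\bigr\}, \]
\[ c_j=\inf_{A\in\Gamma_j}\ \max_{u\in A}I(u). \]

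Then I would verify the three facts that drive the conclusion. Finiteness: taking $h=\mathrm{id}$ and $W=\emptyset$ gives $c_j\le\max_{D_j}I<\infty$. Positivity: a genus/intersection computation shows that for $j>k$ every $A\in\Gamma_j$ meets $\partial B_\rho\cap Z$, because after removing a symmetric set of genus $\le m-j$ from the $(k+m)$-dimensional cap and intersecting the image under the odd map $h$ with the codimension-$k$ subspace $Z$, the remaining genus stays positive, so by the Borsuk--Ulam-type intersection property the $\rho$-sphere in $Z$ is hit; hence $\max_A I\ge\inf_{\partial B_\rho\cap Z}I\ge\alpha$ by $(I_2)$, giving $c_j\ge\alpha$. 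Monotonicity: $\Gamma_{j+1}\subset\Gamma_j$, so $c_j\le c_{j+1}$. The crucial structural point is that $\Gamma_j$ is invariant under any odd homeomorphism fixing $\partial D_m$; applying the symmetric deformation lemma to a near-optimal $A$ would otherwise push $\max_A I$ below $c_j$, contradicting the infimum, so each $c_j$ is a critical value of $I$.

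Finally, multiplicity and unboundedness. Using subadditivity and continuity of the genus together with the deformation, I would establish the standard fact that if $c_j=c_{j+1}=\cdots=c_{j+p}=:c$ then $\gamma(K_c)\ge p+1$, where $K_c=\{u:I'(u)=0,\ I(u)=c\}$; in particular $K_c$ is infinite. Since $(C)_c$ forces each $K_c$ to be compact, $\gamma(K_c)<\infty$, so the nondecreasing sequence $c_j\ge\alpha$ cannot stabilize at a finite value infinitely often; hence $c_j\to\infty$, yielding the unbounded sequence of critical values. The main obstacle I expect is the equivariant deformation under the Cerami $(C)_c$-condition, namely ensuring the pseudo-gradient flow can be made genuinely odd and that the deformation is the identity on $\partial D_m$ so that $\Gamma_j$ is preserved, together with the genus intersection estimate in the positivity step, which is the Borsuk--Ulam input converting ``$\dim Y<\infty$'' plus ``$\gamma>\dim Y$'' into a nonempty intersection with $Z$.
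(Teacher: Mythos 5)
The paper itself contains no proof of this statement: Theorem 2.2 is quoted directly from Rabinowitz's monograph \cite{Rabinowitz1986} (it is essentially Theorem 9.12 there, with the Palais--Smale condition relaxed to the Cerami condition), so the only meaningful comparison is with that classical proof. Your proposal reconstructs exactly that argument: Krasnoselskii genus, an odd pseudo-gradient field and a symmetric deformation lemma under $(C)_c$ (as in Bartolo--Benci--Fortunato), the classes $\Gamma_j$ built from odd maps on finite-dimensional disks with sets of controlled genus excised, the minimax values $c_j$, the Borsuk--Ulam intersection estimate giving $c_j\ge\alpha$ from $(I_2)$, invariance of $\Gamma_j$ under odd deformations, and the multiplicity estimate $\gamma(K_c)\ge p+1$ when $p+1$ consecutive values coincide. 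This is the correct skeleton and matches the cited source.

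However, your final inference has a genuine gap. From ``$c_j=c_{j+1}=\cdots=c_{j+p}$ is impossible for arbitrarily large $p$ (since $\gamma(K_c)<\infty$)'' you conclude $c_j\to\infty$. That observation only rules out the sequence being eventually constant; a nondecreasing sequence can be strictly increasing and still bounded, so unboundedness does not follow. The standard way to close this (Proposition 9.33 in \cite{Rabinowitz1986}) is a separate contradiction argument at the limit level: suppose $c_j\to\bar c<\infty$. Since $\bar c\ge\alpha>0$, the $(C)_{\bar c}$-condition makes $K_{\bar c}$ compact, symmetric, with $0\notin K_{\bar c}$, so $\nu:=\gamma(K_{\bar c})<\infty$ and there is a closed symmetric neighborhood $N$ of $K_{\bar c}$ with $\gamma(N)=\nu$. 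The deformation lemma then yields $\varepsilon>0$ and an odd homeomorphism $\eta$ with $\eta\bigl(I^{\bar c+\varepsilon}\setminus \mathrm{int}\,N\bigr)\subset I^{\bar c-\varepsilon}$, where $I^{s}=\{u: I(u)\le s\}$. Choose $j$ with $c_j>\bar c-\varepsilon$ and $A\in\Gamma_{j+\nu}$ with $\max_A I<\bar c+\varepsilon$ (possible since $c_{j+\nu}\le\bar c$). By the excision property of the classes, $\overline{A\setminus N}\in\Gamma_j$; by invariance, $\eta\bigl(\overline{A\setminus N}\bigr)\in\Gamma_j$; but $I\le\bar c-\varepsilon<c_j$ on that set, contradicting the definition of $c_j$ as an infimum over $\Gamma_j$. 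So the ingredients you assembled do suffice, but it is this limit-level deformation argument --- not the ``no stabilization'' remark --- that actually forces $c_j\to\infty$; as written, your proof would not exclude a bounded, strictly increasing sequence of critical values.
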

\begin{definition}\label{3-1}	Assume $E$ be a Banach space, and $\Phi \in C^1(E,\mathbb{R}^{3})$. For given $c \in \mathbb{R}$, a sequence $\left\lbrace u_n \right\rbrace \subset E $ is called a Cerami sequence of $\Phi$ at a level $c$ (shortly, $(C)_{c}$ sequence) if 
	\begin{equation}\label{e3-1}
		\Phi(u_n)\rightarrow c,~~~~(1+\left\| u_n\right\| )\left\|\Phi'(u_n) \right\| \rightarrow 0.
	\end{equation}	
	We say that $\Phi$ satisfies the Cerami condition at level $c$ (shortly, $(C)_{c}$-condition) if every $(C)_{c}$ sequence of $\Phi$ contains a convergent subsequence. If $\Phi$ satisfies  $(C)_{c}$-condition for every $c \in \mathbb{R}$, then we say that $\Phi$ satisfies the Cerami condition (shortly, $(C)$-condition ). 
\end{definition}
\section{Proof of main results}
\label{3}		
\begin{lemma}\label{3-2}
	Suppose that $(V_1)$, $(f_1)$, $(f_2)$ and $(f_3)$ are satisfied and $c\in\mathbb{R}$. Then any $(C)_c$ sequence of $\Phi$ is bounded.
\end{lemma}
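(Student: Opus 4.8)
The plan is to test the sequence against the combination $\Phi(u_n)-\tfrac14\langle\Phi'(u_n),u_n\rangle$. The choice of the multiplier $\tfrac14$ is the crucial point and the device that handles the nonlocal Kirchhoff term: the quartic contribution $\tfrac{b}{4}\left(\int_{\mathbb{R}^3}|\nabla u_n|^2\,dx\right)^2$ in $\Phi$ is exactly cancelled by $\tfrac14\cdot b\left(\int_{\mathbb{R}^3}|\nabla u_n|^2\,dx\right)^2$ coming from $\langle\Phi'(u_n),u_n\rangle$. Using the expressions (\ref{e2-1}) and (\ref{e2-2}), this cancellation leaves only
\[
\Phi(u_n)-\tfrac14\langle\Phi'(u_n),u_n\rangle=\tfrac14\int_{\mathbb{R}^3}\left(a|\nabla u_n|^2+V(x)u_n^2\right)dx+\int_{\mathbb{R}^3}\left(\tfrac14 u_nf(x,u_n)-F(x,u_n)\right)dx.
\]

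Next I would estimate the two integrals separately from below. For the nonlinear integral, assumption $(f_3)$ gives $u_nf(x,u_n)\ge\left(4+\tfrac{1}{a_0|u_n|^\alpha+b_0}\right)F(x,u_n)$ for $u_n\neq0$; in particular $F(x,u_n)>0$ there and $F(x,0)=0$, so $F\ge0$ everywhere. Consequently
\[
\tfrac14 u_nf(x,u_n)-F(x,u_n)\ge\frac{F(x,u_n)}{4\left(a_0|u_n|^\alpha+b_0\right)}\ge0,
\]
and this term may simply be discarded. For the quadratic integral, the lower bound (\ref{e2-4}) supplied by $(V_2)$ yields $\tfrac14\int_{\mathbb{R}^3}(a|\nabla u_n|^2+V(x)u_n^2)\,dx\ge\tfrac{\eta_0-1}{4\eta_0}\|u_n\|^2$. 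Combining the two gives
\[
\Phi(u_n)-\tfrac14\langle\Phi'(u_n),u_n\rangle\ge\frac{\eta_0-1}{4\eta_0}\|u_n\|^2.
\]

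Finally I would bound the left-hand side from above via the Cerami structure (\ref{e3-1}). Here it is essential that the sequence is Cerami rather than merely Palais--Smale: the extra weight $(1+\|u_n\|)\|\Phi'(u_n)\|\to0$ forces $|\langle\Phi'(u_n),u_n\rangle|\le\|\Phi'(u_n)\|\,\|u_n\|\le(1+\|u_n\|)\|\Phi'(u_n)\|\to0$, even though $\|u_n\|$ could a priori be unbounded. Since also $\Phi(u_n)\to c$, the whole combination is bounded above by a constant $C$ independent of $n$. This gives $\tfrac{\eta_0-1}{4\eta_0}\|u_n\|^2\le C$, and because $\eta_0>1$ the coefficient is strictly positive, so $\|u_n\|$ is bounded.

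I do not anticipate a genuine obstacle in this lemma once the correct test is found; the only delicate points are recognizing that the precise value $\tfrac14$ must be used so that the Kirchhoff term drops out, and observing that the borderline $4$-superlinear exponent in $(f_3)$ is compensated by the coercivity (\ref{e2-4}) coming from $(V_2)$. The finer content of $(f_3)$ --- the restriction $\alpha<\alpha_*$ and the extra weight $\tfrac{1}{a_0|u|^\alpha+b_0}$ --- is not required for boundedness and is presumably reserved for the harder step of extracting a convergent subsequence.
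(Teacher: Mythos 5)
Your proof is correct, and it is genuinely shorter than the paper's own argument. The paper also tests against $4\Phi(u_n)-\langle\Phi'(u_n),u_n\rangle$ (four times your quantity), exploits the same cancellation of the Kirchhoff term, and even writes down exactly your key lower bound $\tfrac{\eta_0-1}{\eta_0}\|u_n\|^2+\int_{\mathbb{R}^3}\left(u_nf(x,u_n)-4F(x,u_n)\right)dx$; at that point your argument stops, since $(f_3)$ makes the integral nonnegative and the Cerami bound then gives $\|u_n\|^2\le \tfrac{\eta_0}{\eta_0-1}M_1$. The paper instead \emph{discards} the $\|u_n\|^2$ term, keeps only the weighted integral $\int_{\mathbb{R}^3}\tfrac{u_nf(x,u_n)}{4a_0|u_n|^{\alpha}+4b_0+1}\,dx$, and takes a long detour: splitting $\mathbb{R}^3$ by $\{|u_n|<1\}$, a reverse H\"older inequality, $L^2$ and $L^{p'r}$ bounds on $f(x,u_n)$, an upper estimate of $\int_{\mathbb{R}^3}\left(a|\nabla u_n|^2+Vu_n^2\right)dx$ via $\langle\Phi'(u_n),u_n\rangle$, and finally (\ref{e2-4}) again to reach $\|u_n\|\le c_{10}+c_{11}\|u_n\|^{\alpha/p'}$ with $\alpha/p'<1$. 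That detour is where the restriction $\alpha<\alpha_*$ in $(f_3)$ is actually consumed, so your closing guess that it is ``reserved for the convergence step'' is wrong about the paper (Lemma \ref{3--3} does not use it); this does not affect your proof, which indeed needs only $uf\ge 4F\ge 0$. Two caveats apply equally to you and to the paper: first, both proofs invoke (\ref{e2-4}), i.e.\ hypothesis $(V_2)$, which is not listed among the lemma's stated assumptions --- an inconsistency of the paper, not a gap peculiar to your argument; second, your claim that the Cerami structure is ``essential'' is an overstatement, since for a mere Palais--Smale sequence one gets $\tfrac{\eta_0-1}{4\eta_0}\|u_n\|^2\le C+o(1)\|u_n\|$, which still forces boundedness. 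The paper's heavier machinery would be the natural tool if the quadratic form $\int_{\mathbb{R}^3}\left(a|\nabla u|^2+Vu^2\right)dx$ were not coercive on $E$ (the setting from which the technique is borrowed); under $(V_2)$ it buys nothing, and your direct argument is the cleaner one.
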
	
\begin{proof}
	It is follows from $(f_3)$ that, for all $u\neq0$ and $x\in \mathbb{R}^{3}$,
	\[
	uf(x,u)-4F(x,u)\geq \frac{1}{4a_0\left| u\right|^{\alpha} +4b_0+1}uf(x,u) >0.
	\]
	Let $\left\lbrace u_n\right\rbrace$ be a $(C)_c$ sequence of $\Phi$, that is, a sequence satisfying (\ref{e3-1}). Set $\Omega_n:=\left\lbrace  x\in\mathbb{R}^{3} : \left| u_n(x) \right| <1 \right\rbrace $ and $\Omega_n^c:=\mathbb{R}^{3}\setminus \Omega_n$. Then there are constants $c_1,c_2>0$ such that
	\[
	4a_0\left| u_n\right|^{\alpha} +4b_0+1 \leq 1/c_1, ~~\forall x\in \Omega_n,
	\]
	and
	\[
	4a_0\left| u_n\right|^{\alpha} +4b_0+1 \leq \left| u_n\right|^{\alpha}/c_2, ~~\forall x\in \Omega_n^c.
	\]
	For $n$ sufficient large, there exists $M_1>0$, such that 
	\begin{equation}\label{e3--2}
		\begin{split}	 
			M_1&\geq 4\Phi(u_n)-\left\langle \Phi'(u_n), u_n \right\rangle \\
			&=\int_{\mathbb{R}^3} \left(a |\nabla u_n|^2+ V(x)u_n^2 \right) dx+\int_{\mathbb{R}^3}\left( u_nf\left( x,u_n\right) -4F\left( x,u_n\right) \right) dx\\
			&\geq \frac{\eta_0-1}{\eta_0}\left\| u_n\right\|^2  +\int_{\mathbb{R}^3}\left( u_nf\left( x,u_n\right) -4F\left( x,u_n\right) \right) dx\\ 
			&\geq  \int_{\mathbb{R}^3}\left( u_nf(x,u_n)-4F(x,u_n)\right) dx\\ 
			&\geq \int_{\mathbb{R}^3}\frac{u_nf(x,u_n)}{4a_0\left| u_n\right|^{\alpha} +4b_0+1}dx\\
			&\geq c_1\int_{\Omega_n}u_nf(x,u_n)dx+c_2\int_{\Omega_n^c}\left| u_n\right|^{-\alpha}u_nf(x,u_n)dx.
		\end{split}
	\end{equation}
	Note that $\alpha<5p'-6$ by $(f_3)$. We have
	\[
	\frac{1}{p'}<\frac{6}{5p'}<\frac{6}{6+\alpha} ~~{\rm{and}}~~\frac{2}{2+\alpha}<\frac{6}{6+\alpha}.
	\]
	Then we can chose a constant $r\in (0,1)$ such that
	\begin{equation}\label{e3-3}
		{\rm{max}}\left\lbrace \frac{6}{5p'},   \frac{2}{2+\alpha}  \right\rbrace \leq r \leq \frac{6}{6+\alpha}.
	\end{equation}
	Let $s:=r/(1-r)>0$. Then $\frac{1}{r}+\frac{1}{-s}=1$. By (\ref{e3--2}) and the inverse H{\"o}lder inequality we have 	
	\begin{equation}\label{e3-4}
		\begin{split}	 
			M_1&\geq c_1\int_{\Omega_n}u_nf(x,u_n)dx+c_2\int_{\Omega_n^c}\left| u_n\right|^{-\alpha}u_nf(x,u_n)dx\\
			&\geq c_1\int_{\Omega_n}u_nf(x,u_n)dx+c_2\left( \int_{\Omega_n^c}\left(u_nf(x,u_n) \right)^rdx \right) ^{1/r}\left( \int_{\Omega_n^c} \left| u_n\right|^{\alpha s}dx \right) ^{1/(-s)}\\		
			&\geq c_1\int_{\Omega_n}u_nf(x,u_n)dx+c_2\frac{\left( \int_{\Omega_n^c}\left(u_nf(x,u_n) \right)^r dx\right) ^{1/r}}{\left|  u_n\right| ^{\alpha }_{\alpha s}}.	
		\end{split}
	\end{equation}
	By $(f_1)$ and $(f_2)$ we have 
	\begin{align*}	
		&\left| f(x,u) \right| ^{p'r}\leq \left( c_3 \left| u\right|^ {(p-1)(p'-1)} \left| f(x,u) \right| \right) ^r=c_4\left( uf(x,u) \right) ^r,~~\forall~\left| u\right| \geq 1,\\
		&\left| f(x,u) \right| ^{2}\leq	c_5\left| u\right| \left| f(x,u)\right| =c_5uf(x,u),~~\forall~\left| u\right| <1.
	\end{align*}
	Therefore by (\ref{e3-4}) we have 
	\begin{equation}\label{e3-5}  
		\left( \int_{\Omega_n^c} \left| f(x,u_n) \right| ^{p'r} dx\right) ^{1/{p'r}}\leq  c_6 \left|  u_n\right| ^{\alpha/{p'} }_{\alpha s},	
	\end{equation}		
	\begin{equation}\label{e3-6} 
		\left( \int_{\Omega_n} \left| f(x,u_n) \right| ^{2} dx\right) ^{1/{2}}\leq  c_7.	
	\end{equation}
	
	In view of (\ref{e3-3}), we easily check that $p'r>1$ , $\alpha s \in[2,6]$ and $(p'r)'\in\left( 2,6\right] $,  where $(p'r)'={p'r}/\left( {p'r-1}\right) $. Consequently, by (\ref{e3-5}), (\ref{e3-6}) and the H{\"o}lder inequality, the Sobolev inequality, for $n$ large enough,
	\begin{align*}	
		\int_{\mathbb{R}^3}\left(a |\nabla u_n|^2+ V(x)u_n^2\right) dx=&\left\langle \Phi'(u_n),u_n \right\rangle -b\left(
		\int_{\mathbb{R}^3}|\nabla u_n|^2dx\right)^2+\int_{\mathbb{R}^3}f(x,u_n)u_ndx\\
		\leq& \left\| u_n \right\| +\int_{\mathbb{R}^3}f(x,u_n)u_ndx\\
		\leq& \left\| u_n \right\|+\left( \int_{\Pi_n}\left| f(x,u_n)\right|^2dx\right)^{1/2}  \left| u_n\right| _2\\
		&+\left( \int_{\Pi_n^c}\left| f(x,u_n)\right|^{p'r}dx\right)^{1/{p'r}}  \left| u_n\right| _{(p'r)'}\\
		\leq& \left\| u_n \right\|+c_7\left| u_n\right| _2+c_6\left| u_n \right|^{\alpha/{p'}}_{\alpha s} \left| u_n \right|_{(p'r)'}\\
		\leq& c_8\left\| u_n \right\|+c_9\left\| u_n \right\|\left\| u_n \right\|^{\alpha/{p'}}.
	\end{align*}	
	where $c_8$, $c_9$,$c_{10}$, $c_{11}>0$ are some constants.\\
	Therefore by (\ref{e2-4}) we have 
	\[
	\left\| u_n \right\| \leq c_{10}+c_{11} \left\| u_n \right\|^{\alpha/{p'}}.
	\]
	Note that $\alpha<p'$. Then we easily verify that $\left\lbrace u_n\right\rbrace$ is bounded.
\end{proof}
\begin{lemma}\label{3--3}  
	Suppose that $(V_1)$ and $(f_1)-(f_4)$ are satisfied. Then $\Phi$ satisfies $(C)_{c}$-condition.
\end{lemma}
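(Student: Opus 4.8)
The plan is to show that every $(C)_c$ sequence $\{u_n\}$ has a strongly convergent subsequence. By Lemma \ref{3-2} such a sequence is bounded in $E$, so, passing to a subsequence, I may assume $u_n\rightharpoonup u$ weakly in $E$, $u_n\to u$ in $L^s_{\mathrm{loc}}(\mathbb{R}^3)$ for every $s\in[2,6)$, and $u_n\to u$ a.e. in $\mathbb{R}^3$. The goal is then to upgrade this to $\|u_n-u\|\to 0$. The mechanism is to test $\Phi'$ through (\ref{e2-2}) with $v=u_n-u$ and isolate $\|u_n-u\|^2$: writing $\int_{\mathbb{R}^3}V(u_n-u)^2=\int_{\mathbb{R}^3}V^+(u_n-u)^2-\int_{\mathbb{R}^3}V^-(u_n-u)^2$, the first two terms recombine with the gradient part into $\|u_n-u\|^2$. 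Since $(1+\|u_n\|)\|\Phi'(u_n)\|\to 0$ and $u_n\rightharpoonup u$, the pairing $\langle\Phi'(u_n)-\Phi'(u),u_n-u\rangle$ tends to $0$.

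The Kirchhoff contribution rearranges as $\left(b\int_{\mathbb{R}^3}|\nabla u_n|^2\,dx\right)\int_{\mathbb{R}^3}|\nabla(u_n-u)|^2\,dx$ plus a cross term of the form $\big(b\int|\nabla u_n|^2-b\int|\nabla u|^2\big)\int\nabla u\cdot\nabla(u_n-u)$; the latter is $o(1)$ because $\int\nabla u\cdot\nabla(u_n-u)\to 0$ by weak convergence, while the former is nonnegative. Collecting everything, I expect the inequality
\[
\|u_n-u\|^2\le \int_{\mathbb{R}^3}V^-(u_n-u)^2\,dx+\int_{\mathbb{R}^3}\big[f(x,u_n)-f(x,u)\big](u_n-u)\,dx+o(1),
\]
so the whole problem reduces to showing the two integrals on the right vanish.

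The $V^-$ term is handled by the finite-measure structure in $(V_1)$: since $V$ is bounded from below, $V^-$ is bounded, and $\{V^->0\}\subseteq\{V^+=0\}\subseteq\{V^+<M\}$ has finite measure. Thus $\int V^-(u_n-u)^2\le\|V^-\|_\infty\int_{\{V^->0\}}(u_n-u)^2$, which I split into the intersection with a large ball $B_R$ (where local $L^2$-convergence gives $o(1)$) and its complement (of small measure, controlled by Hölder against the uniform $L^6$-bound from (\ref{e2-3})); both pieces can be made arbitrarily small, forcing $\int V^-(u_n-u)^2\to 0$.

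The main obstacle is the nonlinear term over all of $\mathbb{R}^3$. The piece $\int f(x,u)(u_n-u)\to 0$ follows from $f(x,u)\in L^{p'}(\mathbb{R}^3)$ (by $(f_1)$) together with $u_n-u\rightharpoonup 0$ in $L^p$, so the crux is $\int f(x,u_n)(u_n-u)\to 0$. Here I split $\mathbb{R}^3=B_R\cup B_R^c$ and then split $B_R^c$ by $\{|u_n|\le l\}$ and $\{|u_n|>l\}$. On $\{|u_n|>l\}$, the growth bound $(f_1)$ gives $|f(x,u_n)|\le 2c|u_n|^{p-1}$, and Hölder with exponents $\tfrac{p-1}{6}+\tfrac16+\tfrac{6-p}{6}=1$ against the measure estimate $|\{|u_n|>l\}|\le Cl^{-6}$ makes this contribution uniformly small for large $l$; on $\{|u_n|\le l\}\cap B_R^c$, assumption $(f_4)$ yields $|f(x,u_n)|\le\delta(R)\,|u_n|$ with $\delta(R)\to 0$, so by Cauchy--Schwarz this piece is uniformly small for large $R$; finally on the fixed ball $B_R$, the growth condition and local strong convergence give $o(1)$ as $n\to\infty$. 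Choosing $l$ then $R$ large and letting $n\to\infty$ shows $\int f(x,u_n)(u_n-u)\to 0$. With both right-hand integrals vanishing, the displayed inequality gives $\|u_n-u\|\to 0$, which proves that $\Phi$ satisfies the $(C)_c$-condition.
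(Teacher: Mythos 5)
Your proof is correct and follows essentially the same route as the paper: boundedness from Lemma \ref{3-2}, weak convergence of a subsequence, pairing $\Phi'$ against $u_n-u$, the finite-measure/boundedness argument for the $V^-$ term, and the three-region splitting ($\{|u_n|>l\}$, $\{|u_n|\le l,\ |x|\ge R\}$, and a fixed ball) based on $(f_1)$, $(f_4)$ and local strong convergence. Two small remarks: your treatment of the Kirchhoff term---keeping the nonnegative piece $b\int_{\mathbb{R}^3}|\nabla u_n|^2\,dx\int_{\mathbb{R}^3}|\nabla(u_n-u)|^2\,dx$ and simply discarding it---is actually more careful than the paper's claim (\ref{e3-9}) that this term tends to zero, which its stated justification does not cover a priori; on the other hand, your assertion that $f(x,u)\in L^{p'}(\mathbb{R}^3)$ follows from $(f_1)$ alone is imprecise, since the constant $c$ in $(f_1)$ is not $p'$-integrable over all of $\mathbb{R}^3$, so you should invoke $(f_2)$ as well (i.e.\ the bound (\ref{e3-13})) to place $f(x,u)\in L^2+L^{p'}$ and then conclude $\int_{\mathbb{R}^3}f(x,u)(u_n-u)\,dx\to 0$ from $u_n-u\rightharpoonup 0$ in $L^2\cap L^p$.
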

\begin{proof}
	From Lemma \ref{3-2} we know that any $(C)_c$ sequence $\left\lbrace  u_n\right\rbrace $ is bounded in $E$. Then, passing to a subsequence, we may assume that $u_n\rightharpoonup u$ in $E$ and $u_n \rightarrow u$ in $L^s_{loc}(\mathbb{R}^{3})$, $s\in \left[ 2,6 \right)  $.\\
	Note that, by (\ref{e2-2})
	\begin{align*}	
		\left\langle  \Phi'(u_n),u_n-u\right\rangle =&\int_{\mathbb{R}^3} \left( a \nabla u_n \nabla (u_n-u)+ V(x)u_n (u_n-u)\right)  dx\\
		&+b
		\int_{\mathbb{R}^3}|\nabla u_n|^2dx\int_{\mathbb{R}^3}\nabla u_n\nabla (u_n-u)dx-\int_{\mathbb{R}^3}f(x,u_n)(u_n-u)dx.\\
		=&\int_{\mathbb{R}^3}\left(a \left| \nabla u_n \right| ^2+ V(x)u_n^2\right) dx -\int_{\mathbb{R}^3}\left(a \nabla u_n \nabla u+ V(x)u_n u \right) dx \\
		&+b
		\int_{\mathbb{R}^3}|\nabla u_n|^2dx\int_{\mathbb{R}^3}\nabla u_n\nabla (u_n-u)dx-\int_{\mathbb{R}^3}f(x,u_n)(u_n-u)dx.\\
		=&\int_{\mathbb{R}^3} \left(a \left| \nabla u_n \right| ^2+ V^+u_n^2\right) dx-\int_{\mathbb{R}^3} \left(a \nabla u_n \nabla u+ V^+u_n u \right) dx\\
		& -\int_{\mathbb{R}^3} V^-u_n^2 dx+\int_{\mathbb{R}^3} V^-u_nu dx-\int_{\mathbb{R}^3}f(x,u_n)(u_n-u)dx.\\
		&+b\int_{\mathbb{R}^3}|\nabla u_n|^2dx\int_{\mathbb{R}^3}\nabla u_n\nabla (u_n-u)dx\\
		=&\left\langle u_n,u_n-u\right\rangle -b\int_{\mathbb{R}^3}|\nabla u_n|^2dx\int_{\mathbb{R}^3}\nabla u_n\nabla (u-u_n)dx\\
		&-\int_{\mathbb{R}^3} V^-\left( u_n^2-u_n u \right)dx-\int_{\mathbb{R}^3}f(x,u_n)(u_n-u)dx ,
	\end{align*}	
	we have 
	\begin{equation}\label{e3-7}
		\begin{split}
			0\leq &\lim\sup_{n\rightarrow \infty}\left( \left\| u_n\right\|^2 - \left\| u\right\|^2 \right)=\lim\sup_{n\rightarrow \infty}\left\langle u_n,u_n-u\right\rangle   \\
			= &\lim\sup_{n\rightarrow \infty} [\left\langle \Phi'(u_n),u_n-u\right\rangle+b\int_{\mathbb{R}^3}|\nabla u_n|^2dx\int_{\mathbb{R}^3}\nabla u_n\nabla (u-u_n)dx\\
			&+\int_{\mathbb{R}^3} V^-u_n \left( u_n- u \right)dx+\int_{\mathbb{R}^3}f(x,u_n)(u_n-u)dx].
		\end{split}
	\end{equation}
	From (\ref{e3-1})
	\begin{equation}\label{e3-8}
		\left\langle \Phi'(u_n),u_n-u\right\rangle \rightarrow 0,~~{\rm{as}}~~n\rightarrow \infty.
	\end{equation}
	Since $u_n \rightharpoonup u $ in $E$, we know that $\int_{\mathbb{R}^3}\nabla u\nabla (u_n-u)dx\rightarrow0$ as $n\rightarrow\infty$. Consequently, by the boundedness of $\left\lbrace u_n\right\rbrace $ in $E$, we have 
	\begin{equation}\label{e3-9}
		b \int_{\mathbb{R}^3}|\nabla u_n|^2dx \int_{\mathbb{R}^3}\nabla u_n\nabla (u-u_n)dx\rightarrow0	,~~{\rm{as}}~~n\rightarrow\infty.
	\end{equation}
	Noting that $V^-(x)\geq 0$ for all $x\in \mathbb{R}^{3}$ and $(V_1)$ implies that $ V^-\in L^\infty (\mathbb{R}^{3})$. Moreover, it follows from ($V_1$) that $\left\lbrace V^+=0 \right\rbrace $ has finite measure, which implies that $\left\lbrace V^-(x)>0 \right\rbrace $ has finite measure. Since $u_n\rightharpoonup u$ in $E$ and $u_n \rightarrow u$ in $L^s_{loc}(\mathbb{R}^{3})$, $s\in \left[ 2,6 \right)  $, we have
	\begin{equation}\label{e3-10}
		\begin{split}
			\int_{\mathbb{R}^3} V^-u_n(u_n-u)dx&= \left| \int_{suppV^-} V^-u_n(u_n-u)dx\right| \\
			&\leq \left\| V^-\right\|_{\infty}\int_{suppV^-} \left| u_n\right|\left| u_n-u\right| dx \\
			&\leq \left\| V^-\right\|_{\infty} \left( \int_{suppV^-}\left| u_n\right| ^2dx\right) ^{1/2}\left( \int_{suppV^-}\left| u_n-u\right| ^2dx\right) ^{1/2}\\
			&\rightarrow 0,~~{\rm{as}}~~n\rightarrow \infty.
		\end{split}
	\end{equation}	
	Next, let $\varepsilon>0$, for $l\geq 1$, it follows from ($f_1$) and H{\"o}lder inequality that
	\begin{align*}
		\int_{\left| u_n\right| \geq l}f(x,u_n)(u_n-u)dx&\leq 2c\int_{\left| u_n\right| \geq l}\left| u_n \right|^{p-1}\left| u_n-u\right| dx\\
		&\leq 2cl^{p-6}\int_{\left| u_n\right| \geq l}\left| u_n \right|^{5}\left| u_n-u\right| dx\\
		&\leq 2cl^{p-6}\left| u_n \right|^{5}_{6}\left| u_n-u\right|_{6},	
	\end{align*}	
	since $p<6$, we may fix $l$ large enough such that
	\begin{equation}\label{e3-11}	
		\int_{\left| u_n\right| \geq l}f(x,u_n)(u_n-u)dx \leq \frac{\varepsilon}{3},	
	\end{equation}	
	for all $n$. Moreover, by ($f_4$) there exists $L>0$ such that
	\begin{equation}\label{e3-12}	
		\int_{\left| u_n\right| \leq l , \left| x\right| \geq L}f(x,u_n)(u_n-u)dx \leq \left| u_n\right| _2\left|u_n-u \right|_2 \sup_{|u_n|\leq l, \left| x\right| \geq L}\frac{\left| f(x,u_n)\right| }{\left| u_n\right| }    \leq \frac{\varepsilon}{3},
	\end{equation}
	for all $n$.
	For any $\varepsilon>0$, by $(f_1)$ and $(f_2)$, there exists $C_\varepsilon>0$ such that
	\begin{equation}\label{e3-13}
		\left| f \left( x,u\right) \right| \leq \varepsilon\left| u\right| +C_\varepsilon\left| u\right|^{p-1},~~ \forall(x,u)\in \mathbb{R}^{3} \times \mathbb{R} ,\\	
	\end{equation}	
	and
	\begin{equation}\label{e3-14}
		\left| F \left( x,u\right) \right| \leq \frac{\varepsilon}{2}   \left| u\right|^2 + \frac{C_\varepsilon}{p} \left| u\right|^{p},~~ \forall(x,u)\in \mathbb{R}^{3} \times \mathbb{R} ,
	\end{equation}	
	where $2< p <6$.
	Since $u_n \rightarrow u$ in $L^s(B_L(0))$ for $s\in \left[ 2,6 \right)$, from  (\ref{e3-13}) we have
	\begin{equation}\label{e3-15}
		\begin{split}	
			\int_{\left| u_n\right| \leq l , \left| x\right| \leq L}f(x,u_n)(u_n-u)dx\leq & \left(  \varepsilon+C_{\varepsilon}\right)   \int_{\left| u_n\right| \leq l,\left| x\right| \leq L} \left( \left| u_n\right|+\left| u_n\right|^{p-1} \right)  \left|u_n-u \right|dx\\
			\leq& \left( \varepsilon+C_{\varepsilon}\right)  \left| u_n\right|_2 \left|u_n-u \right|_{L^2(B_L(0))}\\
			&+\left( \varepsilon+C_{\varepsilon}\right)\left| u_n\right|_p^{p-1}\left|u_n-u \right|_{L^p(B_L(0))} \\
			\leq& \frac{\varepsilon}{3}	,
		\end{split}	
	\end{equation}	
	for $n$ large enough. Combining	(\ref{e3-11}), (\ref{e3-12}) (\ref{e3-15}), we conclude that
	\begin{equation}\label{e3-16}
		\int_{\mathbb{R}^3}f(x,u_n)(u_n-u)dx\leq \varepsilon, 
	\end{equation}	
	for $n$ large enough. Since $\varepsilon $ is arbitrary, (\ref{e3-16}), together with (\ref{e3-7})-(\ref{e3-10}), we get $\left\| u_n\right\| \rightarrow  \left\| u\right\| $. Thus, $u_n\rightarrow u$ in $E$.	
\end{proof}
{\bf Proof of Theorem \ref{th1}} 

Let $\left\lbrace e_j\right\rbrace $ is a total orthonormal basis of $E$ and define $X_j=\mathbb{R}e_j,$
\[  
Y_k={\bigoplus}_{j=1}^kX_j,~~~~~Z_k={\bigoplus}_{j=k+1}^\infty X_j,~~k\in \mathbb{Z}.
\]
\begin{proof}
	Obviously, $\Phi(0)=0$ and  $\Phi$ is even due to $f$ is odd, we will verify that $\Phi$ satisfies the remain conditions of Theorem \ref{th2}.
	
	Firstly, we can verify that $\Phi$ satisfies ($I_2$). By (\ref{e2-4}) and (\ref{e3-14}) with $0<\varepsilon<
	\frac{\eta_0-1}{2\eta_0d_2^2}$, we have	
	\begin{align*}
		\Phi(u)&=\frac{1}{2}\int_{\mathbb{R}^3} \left(a |\nabla u|^2+ V(x)u^2\right) dx+\frac{b}{4}\left(
		\int_{\mathbb{R}^3}|\nabla u|^2dx\right)^2-\int_{\mathbb{R}^3}F(x,u)dx \\
		&\ge \frac{\eta_0-1}{2\eta_0}\left\|u \right\|^2 -\int_{\mathbb{R}^{3}}F(x,u)dx \\
		&\ge \frac{\eta_0-1}{2\eta_0}\left\|u \right\|^2- \frac{\varepsilon}{2}  \left| u\right|^2 _2-\frac{C_\varepsilon}{p} \left| u\right|^{p}_p \\
		&\ge \frac{1}{2} \left( \frac{\eta_0-1}{\eta_0}-\varepsilon d_2^2 \right) \left\|u \right\|^2 -\frac{C_\varepsilon}{p} d^p_p  \left\| u\right\|^{p}\\
		&\ge  \frac{1}{4} \frac{\eta_0-1}{\eta_0} \left\|u \right\|^2 -\frac{C_\varepsilon}{p} d^p_p  \left\| u\right\|^{p},
	\end{align*}
	for all $u\in \partial{B_\rho}$, where $B_\rho=\left\lbrace u\in E:\left\| u\right\| < \rho   \right\rbrace $. Therefore,
	\[
	\Phi|_{\partial B_{\rho}\cap Z_k}\geq \frac{1}{4} \frac{\eta_0-1}{\eta_0} \rho^2 -\frac{C_\varepsilon}{p} d^p_p  \rho^{p}:= \alpha>0,
	\]
	for $\rho$ small enough.	
	
	Secondly, we verify that $\Phi$ satisfies ($I_3$), for any finite dimensional subspace $\tilde{E}\subset E$, there exists a positive intergral number $m$ such that $\tilde{E}\subset E_m$. Since all norms are equivalent in a finite dimensional space, there is a constant $b_1>0$ such that
	\[
	\left| u \right|_4\geq b_1\left\| u\right\| ,~~\forall u\in E_m.
	\]
	By $(f_1)$ and $(f_2)$ we know that for any $M_2>\frac{b}{4b_1^4}$, there is a constant $C(M_2)>0	$ such that 
	\[
	F(x,u)\geq M_2\left| u \right| ^4 -C(M_2)\left| u \right|^2,~~\forall(x,u)\in \mathbb{R}^{3}\times \mathbb{R}.
	\]
	Hence 
	\begin{align*}	
		\Phi(u)&\leq \frac{1}{2}\left\|u \right\|^2+ \frac{b}{4}\left\|u \right\|^4-M_2\left| u \right| ^4_4 +C(M_2)\left| u \right|^2_2\\
		&\leq \frac{1}{2}\left\|u \right\|^2+ \frac{b}{4}\left\|u \right\|^4-M_2b_1^4\left\| u \right\| ^4 +C(M_2)d^2_2\left\| u \right\|^2\\
		&=(\frac{1}{2}+C(M_2)d^2_2)\left\| u \right\|^2-(M_2b_1^4-\frac{b}{4})\left\|u \right\|^4,~~\forall u\in E_m.
	\end{align*}	
	Consequently, there is a large 	$R=R(\tilde{E})>0$ such that $	\Phi(u)\leq 0$ on $\tilde{E}\setminus B_{R}$.
		
	From Lemmas \ref{3-2} and \ref{3--3}, $\Phi$ satisfies $(C)_{c}$-condition, by Theorem \ref{th2} problem (\ref{e1-1}) possesses infinitely many solutions.				
\end{proof}	

\section*{Acknowledgements}
The authors would like to thank the unknown referee for his/her valuable comments and suggestions.

\section*{Funding}
This work was supported by National Natural Science Foundation of China (No. 12161019), Guizhou Provincial Basic Research Program (Natural Science) (No. QKHJC-ZK [2022] YB 318), Natural Science Research Project of Guizhou Provincial Department of Education (No. QJJ [2023] 011), Academic Young Talent Fund of Guizhou Normal University (No. QSXM [2022] 03).

\section*{Abbreviations}
Not applicable.

\section*{Availability of data and materials}
Not applicable.

\section*{Competing interests}
The authors declare that they have no competing interests.

\section*{Authors’ contributions}
The first author writes and revises this paper, the second author checks and proofreads this paper, and the third and fourth authors suggest changes to this paper. All authors read and approved the final manuscript.


\end{document}